\documentclass[12pt]{amsart}
\usepackage{amssymb,amsmath,amsfonts}
\usepackage{mathrsfs}
\usepackage{ae,aecompl}
\usepackage[T1]{fontenc}
\usepackage[pdftex]{graphicx}

\setlength{\textheight}{22 cm} \setlength{\textwidth}{15 cm}
\setlength{\oddsidemargin}{0.5cm}\setlength{\evensidemargin}{0.5cm} 
\setlength{\topmargin}{0cm}
\setlength{\headheight}{1cm} \setlength{\marginparwidth}{6.5cm}

\renewcommand{\mathcal}{\mathscr}
\renewcommand{\epsilon}{\varepsilon}
\newcommand{\dimH}{\dim_{\mathcal H}}

\newtheorem{theorem}{Theorem}[section]
\newtheorem{lemma}[theorem]{Lemma}

\newtheorem{proposition}[theorem]{Proposition}
\newtheorem{conjecture}[theorem]{Conjecture}
\newtheorem{question}[theorem]{Question}

\theoremstyle{definition}

\newtheorem{example}[theorem]{Example}

\theoremstyle{remark}
\newtheorem{remark}[theorem]{Remark}
\newtheorem{observation}[theorem]{Observation}
\newtheorem*{main}{Proof of Theorem \ref{main}}

\numberwithin{equation}{section}

\newcommand{\ba}{{\bf a}}
\newcommand{\bi}{{\bf i}}
\newcommand{\bj}{{\bf j}}
\newcommand{\bk}{{\bf k}}
\newcommand{\bq}{{\bf q}}
\newcommand{\CO}{{\mathcal O}}
\newcommand{\bJ}{{\bf J}}
\newcommand{\mT}{{\bf T}}
\newcommand{\mR}{{\bf r}}
\newcommand{\mQ}{{\bf Q}}
\begin{document}

\title{Dimension of uniformly random self-similar fractals}
\author{Henna Koivusalo}
\date{\today}
\keywords{random fractal, random self-similar set, Hausdorff dimension}
\thanks{Research supported by Academy of Finland, the Centre of Excellence in Analysis and Dynamics Research, and Jenny and Antti Wihuri Foundation. }
\subjclass{Primary: 28A80, 28A78 Secondary: 60D05}

\begin{abstract}
We calculate the almost sure Hausdorff dimension of uniformly random self-similar fractals. These random fractals are generated from a finite family of similarities, where the linear parts of the mappings are independent uniformly distributed random variables at each step of iteration. We also prove that the Lebesgue measure of such sets is almost surely positive in some cases. 
\end{abstract}
\maketitle

\section{Introduction}

The systematic study of iterated function systems (IFSs) and the corresponding fractal sets was originated by Hutchinson in \cite{Hutchinson81}. He proved that, given a collection of similarities $\{S_1,\dots,S_m\}$ of contraction ratios $\{r_1,\dots,r_m\}$, satisfying the open set condition, the unique nonempty, compact invariant set $F$ has dimension equal to the solution $s$ of $\sum_{i=1}^mr_i^s=1$. The open set condition was later proved to be equivalent to the positivity of the $s$-dimensional Hausdorff measure of $F$ by Schief \cite{Schief94}. The corresponding result for self-affine fractals, that is, sets produced from a collection of affine contractions, is due to Falconer \cite{Falconer88} and Solomyak \cite{Solomyak98}. Their result holds for almost all choices of translations. There is a class of self-affine sets, the Bedford-McMullen carpets, which usually does not satisfy the dimension formula of Solomyak and Falconer even though the open set conditions holds, and thus gives examples proving sharpness of their result. The dimension of Bedford-McMullen carpets is calculated in \cite{Bedford84} and \cite{McMullen84}. 

There are several ways to randomize the construction of IFS fractals. We mention a few relevant examples, but the list is not meant to be exhaustive. The dimension formula for random self-similar fractals was obtained, independently around the same time by Falconer \cite{Falconer86}, Graf \cite{Graf87}, and Mauldin and Williams \cite{MauldinWilliams86}. Later on Graf, Mauldin and Williams \cite{GrafMauldinWilliams88} discovered the gauge function giving positive and finite Hausdorff measure to the random self-similar fractal in its dimension. Dimensional properties of the percolation model were studied by Falconer and Grimmett in \cite{FalconerGrimmett92}. Falconer and Miao \cite{FalconerMiao10} calculated the dimensions of random subsets of self-affine sets, Jordan, Pollicott and Simon \cite{JordanPollicottSimon07} those of randomly perturbed self-affine sets, and Gatzouras and Lalley \cite{GatzourasLalley94} those of random Bedford-McMullen carpets. In all of these models the choice for the IFS is done independently and using the same distribution at each step of the construction. We point out that other types of probability measures have also been studied, see J\"arvenp\"a\"a et al. \cite{JJKKSS} and Barnsley, Hutchinson and Stenflo \cite{BarnsleyHutchinsonStenflo05}, \cite{BarnsleyHutchinsonStenflo12}, for example. 

Dimensional results on random self-similar fractals often require some type of non-overlapping condition, see Barnsley, Hutchinson and Stenflo \cite[(2.3)]{BarnsleyHutchinsonStenflo12}, or Falconer \cite[(7.9)]{Falconer86}, Mauldin and Williams \cite[(2), section 1]{MauldinWilliams86}, or Graf \cite[Theorem 7.6, condition b)]{Graf87}. In contrast, in the current work we do not assume a step-by-step separation condition. We study a class of random self-similar fractals, which we call uniformly random self-similar sets, meaning that the linear parts of generating similitudes are uniformly distributed at each step of the construction, and independent of each other. The translations are fixed to be different but are otherwise arbitrary. 

Another model of random similitude IFSs with fixed translations and uniformly distributed linear parts has been considered by Peres, Simon and Solomyak \cite{PeresSimonSolomyak06}. They studied general problems related to the absolute continuity and dimension of random projections of Bernoulli type measures to the real line. Their results also imply a dimension result for a class of random self-similar sets on the real line (see \cite[Corollary 2.5]{PeresSimonSolomyak06}). In their model independent, absolutely continuous, multiplicative errors to the IFS are introduced at each level of construction. Their probability structure significantly differs from ours, since in our model the linear parts of the mappings are independent both between levels and inside them. 

We calculate the almost sure Hausdorff dimension of a uniformly random self-similar set. It is the minimum of a solution $s$ of an expectation equation (see Lemma \ref{number}), and $d$, the dimension of the space. Further, we prove that when $s>d$, the set has almost surely positive Lebesgue measure. The method of proof has been extracted from the dimension theory of self-affine sets: The dimension bounds are obtained from energy estimates, following ideas of Falconer \cite{Falconer88}, and a key lemma is to prove that a transversality condition, such as in \cite[formula (26)]{JordanPollicottSimon07}, holds (see Lemma \ref{transversality}). 

The paper is organized as follows: First in Section \ref{preliminaries} we introduce the notation used, and formulate the main theorem, Theorem \ref{main}. Section \ref{beginning} concerns the geometric properties of uniformly random sets. In Section \ref{end} we give the energy estimate and deduce the main theorem from it. We shortly discuss two related conjectures in Section \ref{further}.

\section{Preliminaries}\label{preliminaries}

We begin by defining $\Omega$, the space of labelled trees. Fix vectors $a_1, \dots, a_m\in\mathbb R^d$, $a_i\neq a_j$ for $i\neq j$, and denote  $\min_{i\neq j} |a_i-a_j| =a_-$. Fix numbers $0<\sigma_-\leq \sigma_+<1$. Let $\bJ_k=\{1,\dots,m\}^k, \bJ_\infty = \{1,\dots, m\}^{\mathbb N}$, and $\bJ=\cup _{k=1}^\infty \bJ_k$. Denote the orthogonal group of $\mathbb R^d$ by $\CO(d)$. Let $\omega:\bJ\to]\sigma_-,
\sigma_+[\times\CO(d)$ be an $m$-branching tree, edges of which are labelled by $]\sigma_-,\sigma_+[\times\CO(d)$. Denote the space of all this kind of labelled trees by $\Omega$. 

Next we will give a probability measure $\mathbb P$ on $\Omega$. Let $\theta$ be the unique uniformly distributed probability on $\CO (d)$ (that is, $\theta$ is the Haar measure, see \cite[Chapter XI]{Halmos74}). Notice that $\theta$ has the property that for $A\subset S^{d-1}$, $x\in S^{d-1}$,
\begin{equation}\label{theta}
\theta\{g\in\mathcal O(d)\mid g(x)\in A\}=\sigma_{d-1}(A), 
\end{equation}
where $\sigma_{d-1}$ is the normalized surface measure on $S^{d-1}$ (see \cite[Theorem 3.7]{Mattila95}). Let $\lambda$ be the normalized Lebesgue measure on $]\sigma_-,\sigma_+[$. Taking the product $(\lambda\times\theta)^{\bJ}$ over the tree defines a probability measure $\mathbb P$ on $\Omega$. Denote the mapping $\omega(\bi)\in]\sigma_-,\sigma_+[\times\CO(d)$ by $T_\bi^\omega=r_\bi^\omega Q_\bi^\omega$. Notice that for $\bi\neq\bj\in\bJ$ the labels $T_\bi^\omega$ and $T_\bj^\omega$ are independent with respect to $\mathbb P$. Denote the expectation by $\mathbb E$. 

Now we are able to make precise the notion of uniformly random self-similar sets. For $\bi\in\bJ$ or $\bJ_\infty$, denote by $\bi_k$ the initial word of $\bi$ of length $k$ and by $i_k$ the $k$-th symbol in $\bi$. Put $f^\omega_{\bj}(x)=T^\omega_\bj(x) + a_{j_{|\bj|}}$ for all $\bj\in\bJ$ and $x\in \mathbb R^d$, and let $\pi:
{\bf J}_\infty\times \Omega \to\mathbb R^d$ be the mapping
\begin{align*}
\pi(\bi, \omega)&=\lim_{k\to\infty} \left( a_{i_1} + T_{\bi_1}^\omega(a_{i_2}) + \dots + T_{\bi_1}^\omega T_{\bi_2}^\omega\cdots T_{\bi_k}^\omega(a_{i_{k+1}})\right )\\
&=\lim_{k\to\infty}f^\omega_{\bi_1}\circ\dots\circ f^\omega_{\bi_k}(0),
\end{align*}
and define the {\it uniformly random self-similar fractal} $F(\omega)$ as 
\begin{equation}\label{defF}
F(\omega) = \bigcup_{\bi\in {\bf J}_\infty}\pi(\bi, \omega).
\end{equation}

Next we introduce some more notation related to the sequence space $\bJ$. Denote by $|\bi|$ the length, or the number of indices of $\bi$. If $\bi$ and $\bj$ are finite or infinite words such that $\bi_{|\bi|}=\bj_{|\bi|}$, then write $\bi\leq\bj$. If neither $\bi\leq\bj$ nor $\bj\leq\bi$, we say that $\bi$ and $\bj$ are incomparable and write $\bi\bot\bj$. Let $\bi\wedge\bj$ be the word of maximal length such that $\bi\wedge\bj\leq\bi$ and $\bi\wedge\bj\leq\bj$. Notice that $\bi\wedge\bj$ can be empty. For $\bi\in \bJ$, let
\[
[\bi]=\{\bj\in\bJ_\infty\mid \bj_{|\bi|}=\bi\}.
\]
For the sake of brevity, for every $\bi\in\bJ$, let 
\[
\begin{aligned}
\mR_\bi^\omega &=r_{\bi_1}^\omega r_{\bi_2}^\omega\cdots r_{\bi_{|\bi|}}^\omega,
\mQ_\bi^\omega=Q_{\bi_1}^\omega\circ Q_{\bi_2}^\omega\circ\dots\circ Q_{\bi_{|\bi|}}^\omega\\
\textrm{and }\mT_\bi^\omega &=T_{\bi_1}^\omega\circ T_{\bi_2}^\omega\circ\dots\circ T_{\bi_{|\bi|}}^\omega.
\end{aligned}
\]
When there is no threat of misunderstanding, we may suppress the relation to 
$\omega$. 

The proof of the following simple lemma is standard, see \cite{Falconer86} for instance, but we give a short proof for the sake of completeness. 

\begin{lemma}\label{number}
There exists a unique number $s$ satisfying
\begin{equation}\label{defnum}
\mathbb E(\sum_{i=1}^mr_i^s)=1.
\end{equation}
Furthermore, $\mathbb E(\sum_{i=1}^mr_i^t)<1$ for all $t>s$.
\end{lemma}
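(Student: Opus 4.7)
The plan is to reduce everything to verifying continuity, strict monotonicity, and the boundary behavior of the single real function
\[
\phi(t) = \mathbb{E}\Big(\sum_{i=1}^m r_i^t\Big) = \frac{m}{\sigma_+-\sigma_-}\int_{\sigma_-}^{\sigma_+} r^t\, dr,
\]
where the second equality uses that, under $\mathbb{P}$, the contraction ratios $r_1^\omega,\dots,r_m^\omega$ are i.i.d.\ uniform on $(\sigma_-,\sigma_+)$, so the expectation of the sum equals $m$ times the common one-variable expectation. Both conclusions of the lemma will then fall out from the intermediate value theorem.

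First I would record that the integral formula makes sense and depends continuously on $t$. Since $r\in(\sigma_-,\sigma_+)$ stays in a fixed compact subinterval of $(0,1)$, the integrand $r^t$ is bounded by $\max(\sigma_-^t,\sigma_+^t)$ locally uniformly in $t$, and dominated convergence yields continuity of $\phi$ on all of $\mathbb{R}$.

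The key step is strict monotonicity, and this is where the hypothesis $\sigma_+<1$ is used. For each fixed $r\in(\sigma_-,\sigma_+)$ the function $t\mapsto r^t$ is strictly decreasing because $r<1$; since $\lambda$ gives positive mass to every open subinterval of $(\sigma_-,\sigma_+)$, integrating preserves the strict inequality, so $\phi$ is itself strictly decreasing on $\mathbb{R}$. For the limiting values I would estimate $\phi(t)\ge m\sigma_+^t\to\infty$ as $t\to-\infty$, and use dominated convergence with the pointwise limit $r^t\to 0$ to obtain $\phi(t)\to 0$ as $t\to+\infty$.

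Putting these ingredients together, $\phi$ is a continuous, strictly decreasing bijection $\mathbb{R}\to(0,\infty)$, so there is a unique $s\in\mathbb{R}$ with $\phi(s)=1$, and for every $t>s$ strict monotonicity gives $\phi(t)<\phi(s)=1$, which is exactly the second claim. I do not anticipate any real technical obstacle; the only subtlety worth flagging is that the statement is made for all real $t$, not just $t\ge 0$, so one really does need $\sigma_+<1$ (and not merely $\sigma_+\le 1$) to rule out a constant or non-monotone $\phi$ and to drive $\phi(t)\to 0$ as $t\to+\infty$.
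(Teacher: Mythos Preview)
Your argument is correct and follows essentially the same route as the paper's proof: continuity via dominated convergence, strict monotonicity from $r_i<\sigma_+<1$, boundary values, and the intermediate value theorem. The only differences are cosmetic---you make the one-variable integral formula for $\phi$ explicit and treat all real $t$ rather than starting from $\phi(0)=m$---but the underlying idea is identical.
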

\begin{proof}
The function $\mathbb E(\sum_{i=1}^mr_i^s)$ is continuous and strictly decreasing in $s$, by dominated convergence and the fact $0<\sigma_-< r_i< \sigma_+<1$. For $s=0$ it attains the value $m$ and decreases to $0$ when $s\to\infty$. Thus a unique value satisfying the equation \eqref{defnum} exists and the latter claim becomes apparent. 
\end{proof}

We now formulate the main theorem, but the proof is postponed until the end of Section \ref{end}. 

\begin{theorem}\label{main}
We have $\dimH F(\omega)=\min\{s,d\}$  almost surely. Furthermore, when $s>d$, the set $F(\omega)$ has positive Lebesgue measure almost surely. 
\end{theorem}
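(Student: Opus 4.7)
The plan is to establish the upper bound $\dimH F(\omega)\le\min\{s,d\}$ by a direct covering argument and the lower bound $\dimH F(\omega)\ge\min\{s,d\}$ by a potential-theoretic energy estimate applied to a Mandelbrot-type random measure on $F(\omega)$. For the upper bound, each level-$n$ piece $\pi([\bi],\omega)$ lies in a ball of radius at most $C\mR_\bi^\omega$, with $C$ depending only on $\sigma_+$ and $\max_i|a_i|$, so for any $t>s$
\[
\mathbb E\Bigl[\sum_{|\bi|=n}(\mR_\bi^\omega)^t\Bigr]=\Bigl(\mathbb E\sum_{i=1}^m r_i^t\Bigr)^n
\]
decays geometrically by Lemma \ref{number}. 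Summability and monotone convergence then force $\sum_{|\bi|=n}(\mR_\bi^\omega)^t\to 0$ almost surely, whence $\mathcal H^t(F(\omega))=0$ almost surely; letting $t$ decrease to $s$ along a countable sequence gives $\dimH F(\omega)\le s$ almost surely, and $\dimH F(\omega)\le d$ is trivial.

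For the lower bound I plan to use a Mandelbrot cascade. The criticality $\mathbb E[\sum_ir_i^s]=1$ makes $W_n^\omega:=\sum_{|\bi|=n}(\mR_\bi^\omega)^s$ a nonnegative mean-one martingale, and since $r<1$ forces $\mathbb E[\sum_ir_i^{2s}]<\mathbb E[\sum_ir_i^s]=1$, the martingale is $L^2$-bounded and so converges almost surely (and in $L^2$) to a limit $W^\omega>0$. The analogous limits $W^\omega_\bi$ on each subtree assemble via Kolmogorov extension into a random measure $M^\omega$ on $\bJ_\infty$ with $M^\omega([\bi])=(\mR_\bi^\omega)^sW^\omega_\bi$, and I let $\mu^\omega:=\pi(\cdot,\omega)_*M^\omega$, a positive random measure supported on $F(\omega)$.

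Fix $t<\min\{s,d\}$ and estimate $\mathbb E[\mathcal E_t(\mu^\omega)]$ by splitting the double integral over the common prefix $\bk=\bi\wedge\bj$. The identity
\[
\pi(\bi,\omega)-\pi(\bj,\omega)=\mT_\bk^\omega(y_\bi-y_\bj),
\]
in which $y_\bi,y_\bj$ depend only on the subtree rooted at $\bk$, together with the orthogonality $|\mQ_\bk^\omega v|=|v|$ and the independence of $\mT_\bk^\omega$ from that subtree, yields $|\pi(\bi,\omega)-\pi(\bj,\omega)|=\mR_\bk^\omega|y_\bi-y_\bj|$ with the two factors independent. Factoring $dM^\omega(\bi)=(\mR_\bk^\omega)^s(r_{\bk i}^\omega)^s\,dM^{(\bk i)}(\bi')$ for $\bi=\bk i\bi'\in[\bk i]$ brings in an extra $(\mR_\bk^\omega)^{2s}$ that pairs against the $(\mR_\bk^\omega)^{-t}$ from the kernel, and summing the geometric series in $n=|\bk|$ gives
\[
\mathbb E[\mathcal E_t(\mu^\omega)]\le C\sum_{n=0}^\infty\bigl(m\,\mathbb E[r^{2s-t}]\bigr)^n,
\]
the prefactor being finite by Lemma \ref{transversality}. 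Since $m\,\mathbb E[r^s]=1$ and $\alpha\mapsto\mathbb E[r^\alpha]$ is strictly decreasing (because $r<1$), the series converges precisely when $2s-t>s$, i.e.\ $t<s$. Thus $\mathcal E_t(\mu^\omega)<\infty$ almost surely, and a countable union over $t$ yields $\dimH F(\omega)\ge\min\{s,d\}$ almost surely.

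For the positive Lebesgue measure statement, when $s>d$ I rerun the same telescoping with the kernel $|\cdot|^{-d}$ replaced by the small-ball indicator $\mathbf{1}_{\{|\cdot|<\epsilon\}}$ and apply Lemma \ref{transversality} in the form $\mathbb P(|y_\bi-y_\bj|<\epsilon)\le C\epsilon^d$, which sidesteps the non-integrability at $t=d$. The identical computation bounds
\[
\mathbb E\int\!\!\int\mathbf{1}_{\{|\pi(\bi,\omega)-\pi(\bj,\omega)|<\epsilon\}}\,dM^\omega(\bi)\,dM^\omega(\bj)\le C\epsilon^d\sum_{n\ge 0}\bigl(m\,\mathbb E[r^{2s-d}]\bigr)^n,
\]
the series converging exactly when $2s-d>s$, i.e.\ $s>d$; by the standard criterion $\mu^\omega$ then has an $L^2$ density almost surely, so $\mu^\omega\ll\mathcal L^d$ and $\mathcal L^d(F(\omega))>0$ almost surely. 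The main obstacle throughout is Lemma \ref{transversality}: one must prove that $y_\bi-y_\bj$ is not concentrated near zero, uniformly in $\bi,\bj$, and this is where the Haar uniformity of the rotations $Q_\bi^\omega$ enters essentially, a single uniform rotation smearing the difference over a $(d-1)$-sphere and the uniform scale randomness upgrading this to a $d$-dimensional distribution with bounded density.
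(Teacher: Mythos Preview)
Your proposal is correct and follows essentially the same route as the paper: an $L^2$-bounded Mandelbrot cascade to build the random measure, the transversality Lemma~\ref{transversality} to control $|\pi(\bi,\omega)-\pi(\bj,\omega)|$ after peeling off the common prefix, and the resulting energy estimate for the lower bound and absolute continuity.

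A few differences worth noting. Your upper bound argument is simpler than the paper's: you show $\mathcal H^t(F(\omega))=0$ for every $t>s$ by the first Borel--Cantelli type estimate, whereas the paper proves the sharper $\mathcal H^s(F(\omega))<\infty$ via the martingale convergence of $X_k=\sum_{|\bi|=k}(\mR_\bi^\omega)^s$. For the energy bound, your factorisation $dM^\omega(\bi)=(\mR_\bk^\omega)^s(r_{\bk i}^\omega)^s\,dM^{(\bk i)}(\bi')$ cleanly separates the dependence on $T_{\bk i}^\omega$ up to the bounded factor $(r_{\bk i}^\omega)^s\le\sigma_+^s$; the paper achieves the same decoupling via Lemma~\ref{changemeas}, which records that replacing $T_{\bk i}^\omega$ by a fixed $T_0$ changes $\mu^\omega|_{[\bk i]}$ only by a factor bounded by $(\sigma_+/\sigma_-)^s$. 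Your bookkeeping then produces the series $\sum_n\bigl(m\,\mathbb E[r^{2s-t}]\bigr)^n$, while the paper keeps one copy of the measure intact and uses $\sum_{|\bq|=k}\mathbb E\mu^\omega[\bq]=1$ together with the cruder bound $\mR_\bq^{s-t}\le\sigma_+^{k(s-t)}$; both series converge precisely for $t<s$.

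One point you pass over quickly deserves care: the ``prefactor'' you declare finite by Lemma~\ref{transversality} still requires the Fubini step splitting into $\{|x(\bi,\omega,k)|\ge|x(\bj,\omega,k)|\}$ and its complement (so that one knows which of $T_{\bk i}^\omega,T_{\bk j}^\omega$ to integrate first), exactly as in the paper's proof of Theorem~\ref{energy estimate}. Your factorisation makes this step transparent since $M^{(\bk i)}$ and $x(\bi,\omega,k)$ do not depend on $T_{\bk i}^\omega$, but it should be stated.
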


\section{Geometric lemmas}\label{beginning}

In this section we prove that a transversality condition holds for our model. We will need a bit of more notation and begin with an observation. 

\begin{observation}\label{shorterx}
Fix $\bi, \bj\in \bJ_\infty$ with $|\bi\wedge\bj|=p$. Notice that 
\[
\begin{aligned}
|\pi(\bi, \omega)&-\pi(\bj,\omega)|\\
&=\mR_{\bi\wedge\bj}^\omega|\mQ^\omega_{\bi\wedge\bj}(a_{i_{p+1}}-a_{j_{p+1}} + T^\omega_{\bi_{p+1}}(x(\bi,\omega, p))-T^\omega_{\bj_{p+1}}(x(\bj,\omega, p)))|\\
&=\mR^\omega_{\bi\wedge\bj}|a_{i_{p+1}}-a_{j_{p+1}} + T^\omega_{\bi_{p+1}}(x(\bi,\omega, p))-T^\omega_{\bj_{p+1}}(x(\bj,\omega, p))|,
\end{aligned}
\]
where the random variables $x(\bi,\omega, p)\in\mathbb R^d$ and $x(\bj,\omega, p)\in\mathbb R^d$ are independent of each other, since $|\bi\wedge\bj|= p$ and $x(\ba,\omega, p)$ only depends on $T_{\ba_k}^\omega$ for $k>p+1$. 
\end{observation}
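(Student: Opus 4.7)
The plan is to expand the series representation of $\pi$ used to define $F(\omega)$, cancel the common prefix $\bi\wedge\bj$, and identify the two residual tails as $x(\bi,\omega,p)$ and $x(\bj,\omega,p)$. Writing
\begin{equation*}
\pi(\bi,\omega)=a_{i_1}+\sum_{k=1}^{\infty}\mT_{\bi_k}^\omega(a_{i_{k+1}}),
\end{equation*}
and similarly for $\bj$, the terms with $k<p$ of the two series coincide term by term, because $\bi_k=\bj_k$ and $i_{k+1}=j_{k+1}$ for all such $k$. Subtracting therefore leaves only the $k=p$ contribution, in which $\mT_{\bi_p}^\omega=\mT_{\bj_p}^\omega=\mT_{\bi\wedge\bj}^\omega$, together with the two tails indexed by $k\geq p+1$.

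The main manipulation is then to factor $\mT_{\bi\wedge\bj}^\omega$ out of both the $k=p$ contribution and the two tails, and additionally peel off $T_{\bi_{p+1}}^\omega$ from the $\bi$-tail and $T_{\bj_{p+1}}^\omega$ from the $\bj$-tail. After this double factoring the $\bi$-tail collapses into $\pi$ applied to the shifted sequence $(i_{p+2},i_{p+3},\dots)$ under the correspondingly shifted labels; I take this quantity as the definition of $x(\bi,\omega,p)$, and similarly $x(\bj,\omega,p)$. The first claimed equality is then exactly what this rewriting produces, and the second equality follows from $\mT_{\bi\wedge\bj}^\omega=\mR_{\bi\wedge\bj}^\omega \mQ_{\bi\wedge\bj}^\omega$ together with $|\mQ_{\bi\wedge\bj}^\omega(v)|=|v|$ for all $v\in\mathbb R^d$, by orthogonality of $\mQ_{\bi\wedge\bj}^\omega$.

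For the independence statement, $x(\bi,\omega,p)$ depends only on labels $T_\ba^\omega$ attached to words $\ba\in\bJ$ that extend $(\bi\wedge\bj)\,i_{p+1}$ and have length at least $p+2$, while $x(\bj,\omega,p)$ depends only on labels extending $(\bi\wedge\bj)\,j_{p+1}$. Since $i_{p+1}\neq j_{p+1}$ these two collections of tree nodes are disjoint, so the product structure of $\mathbb P=(\lambda\times\theta)^{\bJ}$ gives independence directly.

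The argument is essentially bookkeeping and I do not foresee a real obstacle. The one slightly delicate step is the extra peeling of $T_{\bi_{p+1}}^\omega$ beyond the obvious common factor $\mT_{\bi\wedge\bj}^\omega$: this is precisely what ensures that $x(\bi,\omega,p)$ involves only labels at depth strictly greater than $p+1$, and hence depends on a family of labels disjoint from those entering $x(\bj,\omega,p)$.
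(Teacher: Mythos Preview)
Your proposal is correct and is precisely the computation the paper leaves implicit: the Observation is stated without a separate proof, the justification for independence being folded into the statement itself (``since $|\bi\wedge\bj|=p$ and $x(\ba,\omega,p)$ only depends on $T_{\ba_k}^\omega$ for $k>p+1$''). Your expansion of the series for $\pi$, cancellation of the common prefix, factoring of $\mT_{\bi\wedge\bj}^\omega=\mR_{\bi\wedge\bj}^\omega\mQ_{\bi\wedge\bj}^\omega$, and the additional peeling of $T_{\bi_{p+1}}^\omega$ and $T_{\bj_{p+1}}^\omega$ to isolate the tails $x(\bi,\omega,p)$, $x(\bj,\omega,p)$ are exactly the intended manipulations, and your independence argument via disjoint subtrees is the same as the paper's one-line remark.
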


Denote by $\mathbb P^\bi$ the probability on node $\bi$, and let $\mathbb P_\bi=(\lambda\times\theta)^{\bJ\setminus \{\bi\}}$. The statement of the following lemma was inspired by \cite[(26)]{JordanPollicottSimon07}, and the proof influenced by \cite[Lemma 5.1]{JordanPollicottSimon07}. 

\begin{lemma}\label{transversality}
The following transversality condition holds: Fix $\bi,\bj\in \bJ_\infty$ with $|\bi\wedge\bj|=p$ and $\rho>0$. Assume $|x(\bi,\omega, p)|\geq |x(\bj,\omega, p)|$. Then 
\[
\mathbb P^{\bi_{p+1}}(|\pi(\bi, \omega)-\pi(\bj,\omega)|<\rho)\leq C'\frac{\rho^{d}}{(\mR_{\bi\wedge\bj})^{d}},
\]
where $C'=C'(a_-, d, \sigma_-)$. 
\end{lemma}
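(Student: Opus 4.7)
The plan is to apply Observation \ref{shorterx} to rewrite the event as $|v + rQ(x) - w| < R$ under $\mathbb P^{\bi_{p+1}}$, where $r = r^\omega_{\bi_{p+1}}$ and $Q = Q^\omega_{\bi_{p+1}}$ are the random scaling and rotation being integrated over, while $v = a_{i_{p+1}} - a_{j_{p+1}}$, $w = T^\omega_{\bj_{p+1}}(x(\bj, \omega, p))$, $x = x(\bi, \omega, p)$ and $R = \rho/\mR^\omega_{\bi\wedge\bj}$ are constants under this conditional probability. Note that $|v| \geq a_-$ by the definition of $a_-$, and $|w| \leq \sigma_+ |x(\bj, \omega, p)| \leq \sigma_+ |x|$ by the contraction bound $r^\omega_{\bj_{p+1}} \leq \sigma_+$ combined with the standing hypothesis $|x(\bi,\omega,p)| \geq |x(\bj,\omega,p)|$.

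The core estimate is a density bound for $rQ(x)$. For $x \neq 0$, property \eqref{theta} implies that $Q(x/|x|)$ is uniformly distributed on $S^{d-1}$, so $rQ(x) = r|x|\,Q(x/|x|)$ factors as an independent product of a uniform radial coordinate on $(\sigma_-|x|, \sigma_+|x|)$ and a uniform direction on $S^{d-1}$. Passing to spherical coordinates with the Jacobian $|z|^{d-1}$, and using the lower bound $|z| \geq \sigma_-|x|$ on the support, shows that $rQ(x)$ has a Lebesgue density bounded above by $C_d/(\sigma_-^{d-1} |x|^d)$, with $C_d$ depending only on $d$ and $\sigma_+ - \sigma_-$. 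Consequently,
\[
\mathbb P^{\bi_{p+1}}\bigl(rQ(x) \in B(w - v, R)\bigr) \leq C_d R^d \big/ (\sigma_-^{d-1} |x|^d),
\]
which is already of the required form whenever $|x|$ is bounded below by a positive constant depending only on the data.

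To close the argument I would split on the size of $|x|$ at the threshold $\alpha = a_-/(4\sigma_+)$. When $|x| \geq \alpha$ the displayed estimate gives $C' R^d$ with $C'$ depending only on $a_-, d, \sigma_-$. When $|x| < \alpha$, both $|rQ(x)| \leq \sigma_+ |x| < a_-/4$ and $|w| \leq \sigma_+|x| < a_-/4$, so the reverse triangle inequality forces $|v + rQ(x) - w| \geq a_- - a_-/4 - a_-/4 = a_-/2$; hence the event is empty for $R < a_-/2$, while for $R \geq a_-/2$ the trivial bound $1 \leq (2/a_-)^d R^d$ suffices. The main obstacle is precisely the degeneration of the density estimate as $|x| \to 0$; the resolution is the observation that in exactly this regime the fixed offset $v$ dominates the sum, preventing the event from occurring at all.
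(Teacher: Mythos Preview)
Your proof is correct. The route differs from the paper's in how the core estimate is obtained. The paper works in the parameter space $]\sigma_-,\sigma_+[\times\mathcal O(d)$: it shows the event forces $r_{\bi_{p+1}}$ into an interval $I$ of length $\lesssim \gamma/|x|$ (via the reverse triangle inequality) and, for each fixed radius, forces $Q_{\bi_{p+1}}(x/|x|)$ into a spherical cap of $\sigma_{d-1}$-measure $\lesssim (\gamma/|x|)^{d-1}$, then multiplies. You instead push the law of $rQ(x)$ forward to $\mathbb R^d$ and observe, via polar coordinates, that it has Lebesgue density bounded by a constant times $|x|^{-d}$ on its supporting annulus; integrating that density over a ball of radius $R$ gives the same $R^d/|x|^d$ bound in one stroke. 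Both arguments then dispose of the small-$|x|$ regime identically, by noting that the fixed separation $|v|\ge a_-$ dominates and makes the event empty for small $R$. Your density argument is a bit more streamlined; the paper's decomposition makes the roles of the radial and angular randomness more explicit. One minor remark: your constant $C_d$ carries a factor $(\sigma_+-\sigma_-)^{-1}$ from the normalization of $\lambda$, so strictly speaking $C'$ depends on $\sigma_+$ as well as $\sigma_-$; the paper's stated dependence has the same issue, and it is harmless since $\sigma_\pm$ are fixed throughout.
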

\begin{proof}
Denote $a=a_{i_{p+1}}-a_{j_{p+1}}$, $x= x(\bi,\omega, p)$ and $y=x(\bj,\omega, p)$. Then $|a|\geq a_->0$. Notice that throughout the proof the notions $a, T_{\bj_{p+1}}, x$ and $y$ are fixed, since they don't depend on the label at node $\bi_{p+1}$.
Recalling Observation \ref{shorterx}, the probability we want to estimate is the probability of the event
\[
A=\{T_{\bi_{p+1}}\in]\sigma_-,\sigma_+[\times\mathcal O(d)\mid|a+T_{\bi_{p+1}}(x)-T_{\bj_{p+1}}(y)|< \gamma:=\frac{\rho}{\mR_{\bi\wedge\bj}}\}.
\]
Firstly, if $r_{\bi_{p+1}}\notin [|x|^{-1}(|a-T_{\bj_{p+1}}(y)|-\gamma),|x|^{-1}(|a-T_{\bj_{p+1}}(y)|+\gamma)]=:I$, then 
\begin{align*}
|a+T_{\bi_{p+1}}(x)-T_{\bj_{p+1}}(y)|&\geq ||T_{\bi_{p+1}}(x)|-|a-T_{\bj_{p+1}}(y)||\\
&=|r_{\bi_{p+1}}|x| -|a-T_{\bj_{p+1}}(y)||\\
&> \gamma.
\end{align*}
Here $\lambda(I)=2\gamma|x|^{-1}$. Notice that, since $|x|\geq|y|$, $A=\emptyset$ whenever $|x|<\tfrac 12 (a_--\gamma)$, and we may assume that the opposite inequality holds. Similarly $|a-T_{\bj_{p+1}}(y)|\geq \sigma_-|x|-\gamma$. We now have $\lambda(I)\leq 4\gamma(a_--\gamma)^{-1}$. 

Denote the open ball of radius $\delta$ and centre $z$ by $B(z,\delta)$ and the cone of direction $v$ and opening angle $\alpha$ by $V(v,\alpha)$. Further, let $\beta=\gamma|a-T_{\bj_{p+1}}(y)|^{-1}$. Then, for all $r_{\bi_{p+1}}$, for $\beta<1$, that is, for all $\gamma$ satisfying $\gamma|a-T_{\bj_{p+1}}(y)|^{-1}<1$, we have 
\begin{align*}
\{Q_{\bi_{p+1}}\mid |a+Q_{\bi_{p+1}}&(r_{\bi_{p+1}}x)-T_{\bj_{p+1}}(y)|<\gamma\}\\
&=\{Q_{\bi_{p+1}}\mid Q_{\bi_{p+1}}(r_{\bi_{p+1}}x)\in B(a-T_{\bj_{p+1}}(y),\gamma)\}\\
&\subset \{Q_{\bi_{p+1}}\mid Q_{\bi_{p+1}}(r_{\bi_{p+1}}x)\in V\big(a-T_{\bj_{p+1}}(y), \arcsin\beta\big)\}\\
&=\{Q_{\bi_{p+1}}\mid Q_{\bi_{p+1}}(\frac{x}{|x|})\in V\big(a-T_{\bj_{p+1}}(y), \arcsin\beta\big)\cap S^{d-1}\}\\
&=:V.
\end{align*}
Recall that $|x|\geq \tfrac 12(a_--\gamma)$, and $|a-T_{\bj_{p+1}}(y)|\geq \sigma_-\tfrac 12(a_--\gamma)-\gamma$, so that $\beta <1$ for all $\gamma <\tfrac 15a_-\sigma_-\leq \tfrac 15 a_-$. By elementary geometry, recalling \eqref{theta},
\begin{align*}
\theta(V)&=\sigma_{d-1}\Big(V\big(a-T_{\bj_{p+1}}(y), \arcsin\beta\big)\cap S^{d-1}\Big)\leq C''(d)\beta^{d-1}\\
&\leq  C''(d)\gamma^{d-1}(\tfrac 15\sigma_-a_-)^{-d+1}
\end{align*}
for all $\gamma <\tfrac 15a_-\sigma_-\leq \tfrac 15 a_-$. 

By the above considerations, $A\subset I\times V$, and 
\[
\mathbb P^{\bi_{p+1}}(A)\leq \lambda(I)\theta(V)\leq C''(d)\gamma^d5a_-^{-1}(\tfrac 15\sigma_-a_-)^{-d+1}
\]
whenever $\gamma< \tfrac 15a_-\sigma_-\leq \tfrac 15 a_-$. This proves the claim with the constant $C'=5C''(d)a_-^{-1}(\tfrac 15 \sigma_-a_-)^{-d+1}$ for all $\rho<\tfrac 15\sigma_- a_-\mR_{\bi\wedge\bj}$. Larger $\rho$'s can be dealt with by further increa\-sing $C'$ to satisfy $C'\geq \big(\tfrac 15 \sigma_-a_-\big )^{-d}$, since $\mathbb P$ is a probability measure. 
\end{proof}

The following lemma is a simplification of \cite[Lemma 4.5]{JordanPollicottSimon07}.

\begin{lemma}\label{howto}
Fix $\bi\neq \bj\in \bJ_\infty$ with $|\bi\wedge\bj|=p$, and $t<d$. Assume $|x(\bi,\omega, p)|\geq |x(\bj,\omega, p)|$. Then 
\[
\int_\Omega\frac{d\mathbb P^{\bi_{p+1}}(\omega)}{|\pi(\bi,\omega)-\pi(\bj,\omega)|
^t}\leq C\mR_{\bi\wedge\bj}^{-t},
\]
for some $C=C(t,d,a_-, \sigma_-)$.
\end{lemma}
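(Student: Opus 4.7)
The plan is to reduce everything to the distribution-function bound provided by Lemma~\ref{transversality} via the layer-cake representation. Writing $Y = |\pi(\bi,\omega)-\pi(\bj,\omega)|$ and treating $\mR_{\bi\wedge\bj}$, $T^\omega_{\bj_{p+1}}$, $x(\bi,\omega,p)$, $x(\bj,\omega,p)$ as fixed constants under the measure $\mathbb P^{\bi_{p+1}}$ (they are independent of the label at node $\bi_{p+1}$), I would first dispose of the degenerate regime $t\leq 0$: since the $a_i$ are fixed and $r_i<\sigma_+<1$, the image of $\pi$ is contained in a ball of some fixed radius $D$, so $Y^{-t}\leq D^{-t}$ is bounded, and using $\mR_{\bi\wedge\bj}\leq 1$ one absorbs this into a constant times $\mR_{\bi\wedge\bj}^{-t}$. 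Henceforth assume $0<t<d$.

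For such $t$, I would use the identity $Y^{-t} = t\int_Y^\infty \rho^{-t-1}\,d\rho$ together with Tonelli's theorem to rewrite
\[
\int_\Omega \frac{d\mathbb P^{\bi_{p+1}}(\omega)}{Y^t} \;=\; t\int_0^\infty \rho^{-t-1}\,\mathbb P^{\bi_{p+1}}(Y<\rho)\,d\rho.
\]
The hypothesis $|x(\bi,\omega,p)|\geq|x(\bj,\omega,p)|$ lets me feed Lemma~\ref{transversality} into the integrand, giving the pointwise bound $\mathbb P^{\bi_{p+1}}(Y<\rho)\leq \min\bigl(1,\,C'\rho^d\mR_{\bi\wedge\bj}^{-d}\bigr)$.

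Next I would split the $\rho$-integral at the natural scale $\rho_0 := \mR_{\bi\wedge\bj}\,C'^{-1/d}$, where the two bounds match. On $(0,\rho_0]$ the transversality estimate contributes
\[
t\int_0^{\rho_0}\rho^{-t-1}\cdot C'\rho^d\mR_{\bi\wedge\bj}^{-d}\,d\rho = \frac{tC'}{d-t}\,\mR_{\bi\wedge\bj}^{-d}\,\rho_0^{d-t} = \frac{t\,C'^{t/d}}{d-t}\,\mR_{\bi\wedge\bj}^{-t},
\]
which is finite precisely because $t<d$. On $[\rho_0,\infty)$ the trivial bound $\mathbb P\leq 1$ gives
\[
t\int_{\rho_0}^\infty \rho^{-t-1}\,d\rho = \rho_0^{-t} = C'^{t/d}\,\mR_{\bi\wedge\bj}^{-t},
\]
finite because $t>0$. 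Summing the two contributions yields the claim with $C = \tfrac{d\,C'^{t/d}}{d-t}$, a constant depending only on $t,d,a_-,\sigma_-$ through $C'$.

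There is no real obstacle here; the technical heart of the argument is already contained in Lemma~\ref{transversality}, and what remains is the standard layer-cake bookkeeping, with the only delicate point being to verify that both tails of the $\rho$-integration converge, which forces the two conditions $t>0$ and $t<d$ and explains why the exponent $d$ appearing in the transversality estimate is exactly what is needed.
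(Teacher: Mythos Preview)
Your proof is correct and follows essentially the same route as the paper: both use the layer-cake representation to convert the moment integral into a distribution-function integral, then split the $\rho$-range and apply Lemma~\ref{transversality} on the small-$\rho$ part and the trivial bound $\mathbb P\le 1$ on the large-$\rho$ part. The only cosmetic differences are that the paper splits at $\rho=\mR_{\bi\wedge\bj}$ rather than your optimized $\rho_0=\mR_{\bi\wedge\bj}C'^{-1/d}$, and does not separately discuss the case $t\le 0$.
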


\begin{proof}
Using first \cite[Theorem 1.15]{Mattila95}, and then Lemma \ref{transversality} for $\rho\leq \mR_{\bi\wedge\bj}$ and the trivial 
estimate for $\rho\geq \mR_{\bi\wedge\bj}$
\[
\begin{aligned}
\int_\Omega\frac{d\mathbb P^{\bi_{p+1}}(\omega)}{|\pi(\bi,\omega)-\pi(\bj,\omega)|
^t}&=t\int_0^\infty\mathbb P^{\bi_{p+1}}(\omega\mid|\pi(\bi,\omega)-\pi(\bj,\omega)|
<\rho)\rho^{-t-1}\,d\rho\\
&\leq C't\int_0^{\mR_{\bi\wedge\bj}}\frac{\rho^{d}}{\mR_{\bi\wedge\bj}^{d}}\rho^{-
t-1}\,d\rho + t\int_{\mR_{\bi\wedge\bj}}^\infty\rho^{-t-1}\,d\rho\\
&\leq (\frac{C'}{d-t} + 1)\mR_{\bi\wedge\bj}^{-t}, 
\end{aligned}
\]
where $C'=C'(a_-, d, \sigma_-)$ from Lemma \ref{transversality}. 
\end{proof}

\section{Proof of the main theorem}\label{end}

Recall the number $s$ from Lemma \ref{number}, and denote the Lebesgue measure on $\mathbb R^d$ by $\lambda^d$. In this section we prove the main theorem, Theorem \ref{main}, namely that almost surely $\dimH F(\omega)=\min\{s, d\}$, and that $\lambda^d(F(\omega))>0$ almost surely when $s>d$. We first prove the upper bound for the dimension as Proposition \ref{martingale}. We then define random measures on $\bJ_\infty$, almost surely projecting onto $F$ as measures of finite energy. The lower bound for the dimension is then an easy consequence of the energy estimate. To prove the statement of positive Lebesgue measure, we study the absolute continuity of these measures. 

Fix $0<t<s$ for the time being. For all $k$, denote by $\mathcal F_k$ the sigma-algebra generated by the random variables $T_\ba$ for all $|\ba|\leq k$. 

The proofs of Lemma \ref{mart}, Proposition \ref{martingale} and Lemma \ref{defmeas} are essentially from the proof of \cite[Theorem 15.1]{Falconer03} (also see \cite{MauldinWilliams86} and \cite{Graf87}). For the convenience of the reader, and since the exposition in \cite{Falconer03} is not overly detailed, we repeat the necessary arguments here. 

\begin{lemma}\label{mart}
For all $u>0$ and $k\in\mathbb N$, 
\[
\mathbb E\big(\sum_{|\bi|=k+1}\mR_\bi^u\big)=\mathbb E\Big(\big(\sum_{i=1}^mr_i^u\big)\Big)^{k+1}.
\]
\end{lemma}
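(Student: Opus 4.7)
The plan is to proceed by induction on $k$, with the key observation being that the labels $\omega(\bj)$ at distinct nodes $\bj\in\bJ$ are mutually independent under $\mathbb P$, since $\mathbb P=(\lambda\times\theta)^{\bJ}$ is a product measure. For the base case $k=0$: every word of length $1$ is a single symbol $i\in\{1,\dots,m\}$ and $\mR_{(i)}=r_i$, so both sides equal $\mathbb E(\sum_{i=1}^m r_i^u)$.

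For the inductive step, I would split each $\bi$ of length $k+2$ as $\bi=\ba\cdot j$ with $|\ba|=k+1$ and $j\in\{1,\dots,m\}$, and factor $\mR_\bi^u=\mR_\ba^u\,r_{\ba\cdot j}^u$, so that
\[
\sum_{|\bi|=k+2}\mR_\bi^u=\sum_{|\ba|=k+1}\mR_\ba^u\Big(\sum_{j=1}^m r_{\ba\cdot j}^u\Big).
\]
For each fixed $\ba$, the product $\mR_\ba^u$ depends only on the labels at the ancestors $\ba_1,\dots,\ba_{k+1}$, while $\sum_{j=1}^m r_{\ba\cdot j}^u$ depends only on the labels at the $m$ children of $\ba$; these are disjoint sets of nodes, so the two factors are independent. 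Moreover, since all labels are identically distributed, $\mathbb E(\sum_{j=1}^m r_{\ba\cdot j}^u)=\mathbb E(\sum_{i=1}^m r_i^u)$ irrespective of $\ba$. Taking expectation term by term and then applying the inductive hypothesis to $\mathbb E(\sum_{|\ba|=k+1}\mR_\ba^u)$ yields $\bigl(\mathbb E(\sum_{i=1}^m r_i^u)\bigr)^{k+2}$, as required.

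A one-line alternative would be to observe that along any fixed $\bi$ with $|\bi|=k+1$ the scale factors $r_{\bi_1},\dots,r_{\bi_{k+1}}$ live at distinct nodes and so are i.i.d.\ uniform on $]\sigma_-,\sigma_+[$, giving $\mathbb E(\mR_\bi^u)=(\mathbb E(r^u))^{k+1}$; then summing over the $m^{k+1}$ words of length $k+1$ and writing $m\,\mathbb E(r^u)=\mathbb E(\sum_{i=1}^m r_i^u)$ delivers the formula directly. There is no genuine obstacle: the lemma is purely a product-measure/linearity-of-expectation calculation, and the only thing to verify carefully is the bookkeeping that the relevant node sets are disjoint so that independence applies.
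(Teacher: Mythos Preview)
Your proof is correct and follows essentially the same approach as the paper: both peel off the last generation, use that $\mR_\ba^u$ depends only on nodes up to level $k$ while the children's contributions are independent of these, and then iterate. The paper phrases the same computation via the conditional expectation $\mathbb E(\,\cdot\mid\mathcal F_k)$ (your independence step is exactly what makes $\mR_\ba^u$ come out of that conditioning), and your one-line alternative is a legitimate shortcut that the paper does not spell out.
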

\begin{proof}
Notice that, recalling the definitions of $\mR_\bi$ and $r_\bi$ from Section \ref{preliminaries},
\begin{equation}\label{cond}
\begin{aligned}
\mathbb E\big(\sum_{|\bi i|=k+1}\mR_{\bi i}^u\mid\mathcal F_k\big)&=\mathbb E\Big(\sum_{|\bi|=k}\sum_{i=1}^m\mR_\bi^u r_{\bi i}^u\mid\mathcal F_k\Big)\\
&=\sum_{|\bi|=k}\mR^u_\bi\mathbb E\Big(\sum_{i=1}^m r_{\bi i}^u\Big)\\
&=\sum_{|\bi|=k}\mR^u_\bi\mathbb E\big(\sum_{i=1}^mr_i^u\big).
\end{aligned}
\end{equation}
Iterating this the claim follows for $\mathbb E(\sum_{|\bi|=k+1}\mR_\bi^u)=\mathbb E(\mathbb E(\sum_{|\bi|=k+1}\mR_\bi^u\mid\mathcal F_k))$. 
\end{proof}

\begin{proposition}\label{martingale}
For $\mathbb P$-almost all $\omega\in \Omega$
\[
\dimH F(\omega)\leq \min\{s, d\}.
\]
\end{proposition}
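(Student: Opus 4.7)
The plan is to cover $F(\omega)$ by the natural level-$k$ family of pieces arising from the tree, show these pieces have diameter controlled by $\mR_\bi$, and run a Markov/Fubini argument on $\sum_{|\bi|=k}\mR_\bi^t$ for $t>s$. The bound $\dimH F(\omega)\le d$ is of course trivial, so the content lies in $\dimH F(\omega)\le s$, useful only when $s<d$.

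First I would verify uniform boundedness of the random fractal. From $|a_i|\le M:=\max_j|a_j|$ and $r_\bi^\omega\le\sigma_+<1$, the series defining $\pi(\bi,\omega)$ is dominated by a geometric one, so $|\pi(\bi,\omega)|\le M/(1-\sigma_+)$ for every $\omega$; hence $F(\omega')$ has diameter at most $D:=2M/(1-\sigma_+)$ for \emph{every} realization $\omega'$. For $\bi\in\bJ_k$ set $F^\omega_\bi:=\{\pi(\bi\bj,\omega):\bj\in\bJ_\infty\}$. A short bookkeeping identification --- using that the label of $\omega$ at a node $\bi\bj_1\cdots\bj_\ell$ equals the label of the appropriately shifted tree $\omega^\bi$ at $\bj_1\cdots\bj_\ell$ --- expresses $F^\omega_\bi$ as the image of $F(\omega^\bi)$ under $f^\omega_{\bi_1}\circ\cdots\circ f^\omega_{\bi_k}$. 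Since $F(\omega^\bi)$ has diameter at most $D$ and this composed similarity has linear part of norm $\mR_\bi$, we get $\diam F^\omega_\bi\le\mR_\bi D\le\sigma_+^k D$. As $F(\omega)=\bigcup_{|\bi|=k}F^\omega_\bi$, this produces a cover of mesh tending to zero and the estimate
\[
\mathcal{H}^t_{\sigma_+^k D}(F(\omega))\le D^t\sum_{|\bi|=k}\mR_\bi^t.
\]

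For the probabilistic core, fix $t>s$. By Lemma \ref{number} we have $c:=\mathbb{E}\bigl(\sum_{i=1}^m r_i^t\bigr)<1$, and Lemma \ref{mart} gives $\mathbb{E}\bigl[\sum_{|\bi|=k}\mR_\bi^t\bigr]=c^k$. Monotone convergence then yields $\mathbb{E}\bigl[\sum_{k\ge 1}\sum_{|\bi|=k}\mR_\bi^t\bigr]=\sum_k c^k<\infty$, so $\sum_{|\bi|=k}\mR_\bi^t\to 0$ $\mathbb{P}$-a.s. Combined with the display, $\mathcal{H}^t(F(\omega))=0$, hence $\dimH F(\omega)\le t$ a.s. Intersecting full-measure events along a countable sequence $t_n\searrow s$ and invoking the trivial bound $\dimH F(\omega)\le d$ gives $\dimH F(\omega)\le\min\{s,d\}$ almost surely.

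I do not expect a genuine obstacle: the one step requiring a little care is the self-similar identification giving $\diam F^\omega_\bi\le\mR_\bi D$; once that is in place, the rest is just Markov's inequality applied to the moment formula of Lemma \ref{mart}.
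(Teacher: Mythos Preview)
Your proof is correct, but it takes a different route from the paper's. The paper works directly at the critical exponent $s$: it sets $X_k=\sum_{|\bi|=k}\mR_\bi^s$, shows $(X_k)$ is an $L^2$-bounded martingale with respect to $(\mathcal F_k)$, and invokes the martingale convergence theorem to get $X_k\to X$ a.s.\ with $\mathbb E(X)=1$, hence $\mathcal H^s(F(\omega))\le R\,X(\omega)<\infty$ almost surely. You instead work at supercritical exponents $t>s$, where Lemma~\ref{mart} gives $\mathbb E\sum_{|\bi|=k}\mR_\bi^t=c^k$ with $c<1$, and a first-moment (Borel--Cantelli type) argument forces $\mathcal H^t(F(\omega))=0$ a.s.; then you let $t\searrow s$ along a sequence. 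Your approach is more elementary---no $L^2$ estimate, no martingale convergence theorem---at the cost of not obtaining finiteness of $\mathcal H^s$. In the context of the paper that stronger conclusion is not essential for the dimension statement itself, but the $L^2$-martingale computation is reused verbatim in Lemma~\ref{defmeas} to construct the random measure $\mu^\omega$, so the paper's argument is doing double duty.
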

\begin{proof}
Certainly $\dimH F\le d$, and we only have to check that $\dimH F\le s$. 

Define a sequence of random variables $X_k=\sum_{|\bi|=k}\mR_\bi^s$. We have, for all $\omega\in\Omega$, for the Hausdorff measure of $F(\omega)$
\[
\mathcal H^s(F(\omega))\leq R\liminf_{k\to\infty}X_k(\omega),
\]
since $F(\omega)\subset\cup_{\bi\in\bJ_k}\pi([\bi],\omega)$, where the diameter of $\pi([\bi],\omega)$ is bounded from above by $R\mR_\bi$ for a constant $R>0$ independent of $\omega$. We prove that $X_k$ is an $L^2$-bounded martingale with respect to the sequence of sigma-algebras $\mathcal F_k$. Firstly, by the choice of $s$, for any $\bi\in\bJ$,
\[
\mathbb E \Big [\sum_{i=1}^m r_{\bi i}^s\Big ]=1
\]
so that by calculation \eqref{cond} above $\mathbb E(X_{k+1}\mid\mathcal F_k)=X_k$ immediately. Thus $X_k$ is a martingale. Furthermore, 
\[
\begin{aligned}
\mathbb E(X_k^2\mid \mathcal F_{k-1})&=\mathbb E\Big[(\sum_{|\bi i|=k}\mR_{\bi i}^s)^2\mid \mathcal F_{k-1} \Big]\\
&=\mathbb E\Big[\sum_{|\bi|=k-1}\mR_\bi^{2s}\sum_{i=1}^m\sum_{j=1}^m r_{\bi i}^s r_{\bi j}^s\mid\mathcal F_{k-1}\Big]\\ 
&\ + \mathbb E
\Big[\sum_{|\bi|=k-1}\sum_{|\ba|=k-1, 
\ba\neq\bi}\mR_{\bi}^s\mR_\ba^s\sum_{i=1}^m\sum_{j=1}^m r_{\bi i}^s r_{\ba j}^s\mid 
\mathcal F_{k-1}\Big]\\
&=\sum_{|\bi|=k-1}\mR_\bi^{2s}\mathbb E\Big[\sum_{i=1}^m\sum_{j=1}^m r_{\bi i}^s r_{\bi j}^s\Big] + \sum_{|\bi|=k-1}\sum_{|\ba|=k-1, \ba\neq \bi}\mR_\bi^s\mR_\ba^s\mathbb E\Big[\sum_{i=1}^m\sum_{j=1}^m r_{\bi i}^s r_{\ba j}^s\Big].
\end{aligned}
\]
For $\ba\neq\bi$, $|\ba|=|\bi|$, the random variables $r_{\bi i}$ and $r_{\ba j}$ are independent for all $i, j=1,\dots,m$. Thus, by choice of $s$, 
\[
\mathbb E\Big [\sum_{i=1}^m\sum_{j=1}^m r_{\bi i}^s r_{\ba j}^s \Big ] = \mathbb E\Big [\sum_{i=1}^m r_{\bi i}^s \Big ]\mathbb E\Big [ \sum_{j=1}^m r_{\ba j}^s\Big ]=1.
\]
Notice that $\mathbb E(\sum_{i=1}^m r_{\bi i}^{2s})$ does not depend on $\bi\in\bJ$. Denote this quantity by $\lambda$, and notice that $\lambda<1$ by Lemma \ref{number}. Then, again by the definition of $s$,
\[
\begin{aligned}
\mathbb E \Big [\sum_{i=1}^m\sum_{j=1}^m r_{\bi i}^s r_{\bi j}^s\Big ] &= \mathbb 
E\Big [\sum_{i=1}^m r_{\bi i}^{2s}\Big ] + \mathbb 
E\Big [\sum_{i=1}^m\sum_{j=1, j\neq i}^m r_{\bi i}^s r_{\bi j}^s\Big ]\\
&\leq\lambda + 1.
\end{aligned}
\]
From the above calculations
\[
\mathbb E(X_k^2\mid \mathcal F_{k-1})\leq \sum_{|\bi| = k-1}\mR_\bi^{2s} \lambda + X_{k-1}^2.
\]
By Lemma \ref{mart}, 
\[
\mathbb E(\sum_{|\bi|=k-1}\mR_{\bi}^{2s})=\lambda^{k-1},
\]
and hence,
\[
\mathbb E(X_k^2)\leq \lambda^k + \mathbb E(X_{k-1}^2)\leq \sum_{k=1}^\infty \lambda^k + 1<\infty.
\]
By the martingale convergence theorem, see \cite[Theorems 12.24 and 12.28]{MortersPeres10}, the $L^2$-boundedness of the martingale $(X_k)$ implies that the sequence of random variables converges (almost surely and in $L^2$) to a random variable $X$ and also that 
\[
\mathbb E(X\mid \mathcal F_k)=X_k,
\]
most importantly giving $\mathbb E(X)=1$. This means that $X(\omega)<\infty$ for almost every $\omega$, and thus the upper bound for the dimension follows. 
\end{proof}

\begin{lemma}\label{defmeas}
There exists a random measure $\mu^\omega$ on $\bJ_\infty$ having the properties
\begin{enumerate}
\item{almost surely $0<\mu^\omega(\bJ_\infty)<\infty$,}
\item{$\mathbb E(\mu^\omega[\bi]\mid\mathcal F_k)=\mR_\bi^{s}$ for all $\bi\in\bJ_k$, and}
\item{for all $k$, $\mathbb E(\sum_{|\bi|=k}\mu^\omega[\bi])=1$.}
\end{enumerate}
\end{lemma}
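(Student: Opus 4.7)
My plan is to build $\mu^\omega$ by specifying consistent values on the cylinders $[\bi]$ and invoking Kolmogorov's extension theorem. For each $\bi \in \bJ$ and each $n \geq |\bi|$, set
\[
Y_n^\bi(\omega) = \mR_\bi(\omega)^{-s}\!\!\!\sum_{\substack{|\ba|=n \\ \bi \leq \ba}} \mR_\ba(\omega)^s.
\]
This is the exact analogue of $X_{n-|\bi|}$ from Proposition \ref{martingale}, but computed in the subtree rooted at $\bi$; since that subtree carries i.i.d.\ labels of the same distribution as the original tree, the $L^2$-bounded martingale argument of Proposition \ref{martingale} applies verbatim and gives $Y_n^\bi \to Y^\bi$ almost surely and in $L^1$, with $\mathbb E(Y^\bi)=1$. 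I then define $\mu^\omega([\bi]) = \mR_\bi^s\, Y^\bi(\omega)$.

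Kolmogorov's theorem requires the consistency $\sum_{i=1}^m \mu^\omega([\bi i]) = \mu^\omega([\bi])$, which after dividing by $\mR_\bi^s$ becomes the one-step branching identity $Y^\bi = \sum_{i=1}^m r_{\bi i}^s\, Y^{\bi i}$. This identity is immediate at every finite level from the definition of $Y_n^\bi$, and survives passage to the a.s.\ limit because the sum has only $m$ terms. Hence $\mu^\omega$ extends almost surely to a Borel measure on $\bJ_\infty$.

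Properties (2) and (3) then fall out: $Y^\bi$ depends only on labels at nodes strictly below $\bi$, hence is independent of $\mathcal F_k$ when $|\bi|=k$, so
\[
\mathbb E(\mu^\omega[\bi] \mid \mathcal F_k) = \mR_\bi^s\, \mathbb E(Y^\bi) = \mR_\bi^s,
\]
and summing over $|\bi|=k$ and taking expectations yields $\mathbb E(\sum_{|\bi|=k}\mu^\omega[\bi]) = \mathbb E(X_k) = 1$.

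The delicate point is the strict positivity in (1). Finiteness $\mu^\omega(\bJ_\infty) = Y^\emptyset < \infty$ a.s.\ is already built into the $L^2$-convergence from Proposition \ref{martingale}, but positivity requires a separate branching argument. Applying the branching identity at the root gives $Y^\emptyset = \sum_{i=1}^m r_i^s\, Y^{(i)}$, where the $Y^{(i)}$ are i.i.d.\ copies of $Y^\emptyset$ independent of the strictly positive root labels $r_i$. Hence $p := \mathbb P(Y^\emptyset = 0)$ satisfies $p = p^m$, so $p\in\{0,1\}$; since $\mathbb E(Y^\emptyset)=1$ rules out $p=1$, we conclude $p=0$, which is precisely the a.s.\ positivity in (1).
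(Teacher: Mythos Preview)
Your proof is correct and follows essentially the same route as the paper: define the cylinder values as limits of the $L^2$-bounded martingales from Proposition~\ref{martingale}, check the one-step additivity to extend to a Borel measure on $\bJ_\infty$, and use the branching relation together with $p=p^m$ and $\mathbb E(Y^\emptyset)=1$ to rule out $p>0$. The only cosmetic difference is that you work with the normalized quantities $Y_n^\bi=\mR_\bi^{-s}\sum_{\bi\leq\ba,\,|\ba|=n}\mR_\ba^s$, whereas the paper keeps the unnormalized $\mu_k[\bi]=\sum_{\bj\in[\bi]\cap\bJ_k}\mR_\bj^s$; the arguments are otherwise identical.
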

\begin{proof}
For $\bi\in \bJ$, define a sequence of random variables 
\[
\mu_k[\bi]=\sum_{\bj\in[\bi]\cap\bJ_k}\mR_\bj^s.
\]
Exactly the same proof as above for $X_k$ shows that also $\mu_k[\bi]$ is an $L^2$-bounded martingale, and hence converges to a $\tilde\mu[\bi]$ with $0\leq \tilde\mu[\bi]<\infty$ almost everywhere, and 
\[
\mathbb E(\tilde\mu[\bi]\mid\mathcal F_{|\bi|})=\mR^s_\bi.
\]
Furthermore, since $\tilde\mu[\bi]=\sum_{i=1}^m\tilde\mu[\bi i]$ for all $\bi\in\bJ$, almost surely the cylinder function $\tilde\mu$ extends naturally to a Borel measure $\mu^\omega$ on $\bJ_\infty$ with $\mu^\omega[\bi]=\tilde\mu[\bi]$ for all $\bi\in\bJ$. 

Now, $\tilde\mu[\bi]=0$ with probability $q<1$ and, on the other hand, $\tilde\mu[\bi]=0$ if and only if $\tilde\mu[\bi i]=0$ for all $i=1,\dots,m$. Notice that by self-repeating nature of the probability, $\tilde\mu[\bi]$ and $r_{\bi i}^{-s}\tilde\mu[\bi i]$ have the same distribution. By independence of $\tilde\mu[\bi i]=0$ and $\tilde \mu[\bi j]=0$ for $i\neq j$, this leads to $q^m=q$ and hence $\tilde\mu[\bi]>0$ almost surely. Then $0<\mu^\omega(\bJ_\infty)<\infty$. 

Lemma \ref{mart} and the definition of $s$ give the last claim, since for all $\bi\in\bJ$ we have $\mathbb E(\mu^\omega[\bi])=\mathbb E(\mathbb E(\mu^\omega[\bi]\mid \mathcal F_{|\bi|}))=\mathbb E(\mR_\bi^s)$. 
\end{proof}

The following easy lemma will be the key to proving an energy estimate for the measure $\mu^\omega$. 

\begin{lemma}\label{changemeas}
Let $\omega,\omega'\in\Omega$ and $\bi\in\bJ$. If $T_\ba^\omega=T_\ba^ {\omega'}$ for all $\ba\neq \bi$, then 
\begin{equation}\label{muprop}
\mu^\omega|_{[\bi]} = \left (\frac{r_\bi^\omega}{r_\bi^{\omega'}}\right )^{s}\mu^{\omega'}|_{[\bi]}
\end{equation}
and for all $\bj\bot\bi$, in fact $\mu^\omega|_{[\bj]}=\mu^{\omega'}|_{[\bj]}$. 
\end{lemma}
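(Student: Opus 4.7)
The plan is to unwind the definition of $\mu^\omega$ from Lemma \ref{defmeas} (the almost sure limit of $\mu_n^\omega[\ba] = \sum_{\bk \in [\ba]\cap\bJ_n}(\mR_\bk^\omega)^s$) and track how changing the single label at node $\bi$ propagates through those partial sums. The underlying algebraic fact I will exploit is trivial but decisive: for any finite $\bk \in \bJ$, the product $\mR_\bk^\omega = r_{\bk_1}^\omega \cdots r_{\bk_{|\bk|}}^\omega$ contains the factor $r_\bi^\omega$ precisely when $\bi \leq \bk$, and otherwise agrees with $\mR_\bk^{\omega'}$ factor by factor. Consequently $\mR_\bk^\omega = (r_\bi^\omega/r_\bi^{\omega'})\,\mR_\bk^{\omega'}$ whenever $\bi\leq\bk$, while $\mR_\bk^\omega = \mR_\bk^{\omega'}$ whenever $\bi \not\leq \bk$.

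For the first assertion, I would fix any cylinder word $\ba \in \bJ$ with $\bi \leq \ba$. Every $\bk \in [\ba] \cap \bJ_n$ then also has $\bi$ as a prefix, so pulling the deterministic constant $(r_\bi^\omega/r_\bi^{\omega'})^s$ out of the sum gives $\mu_n^\omega[\ba] = (r_\bi^\omega/r_\bi^{\omega'})^s\,\mu_n^{\omega'}[\ba]$. Passing to $n \to \infty$ will yield the claimed scaling on each such cylinder, and since the collection $\{[\ba] : \bi \leq \ba\}$ is closed under finite intersections and generates the Borel subsets of $[\bi]$, the identity \eqref{muprop} will follow by the standard uniqueness lemma for finite measures.

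For the second assertion, I would invoke the simple combinatorial fact that if $\bj \bot \bi$ and $\bj \leq \bk$, then $\bi$ cannot be a prefix of $\bk$: otherwise $\bi$ and $\bj$ would both be prefixes of $\bk$ and hence comparable, contradicting $\bj \bot \bi$. The algebraic identity above then gives $\mR_\bk^\omega = \mR_\bk^{\omega'}$ for every $\bk$ appearing in $\mu_n^\omega[\ba]$ with $\bj \leq \ba$, so $\mu_n^\omega[\ba] = \mu_n^{\omega'}[\ba]$, and the same cylinder argument yields $\mu^\omega|_{[\bj]} = \mu^{\omega'}|_{[\bj]}$.

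I do not anticipate any genuine obstacle. Since the partial sums differ from each other by a deterministic factor, almost-sure convergence at $\omega$ transfers to $\omega'$, so the set of $\omega \in \Omega$ on which both measures are well-defined and on which both identities hold has full $\mathbb P$-measure. The only bookkeeping point worth emphasizing is the routine extension from cylinders to all Borel subsets of $[\bi]$ (respectively $[\bj]$).
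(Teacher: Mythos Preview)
Your proposal is correct and follows essentially the same approach as the paper: both rest on the single algebraic identity $\mR_\ba^\omega = (r_\bi^\omega/r_\bi^{\omega'})\,\mR_\ba^{\omega'}$ for $\bi\leq\ba$ and $\mR_\ba^\omega = \mR_\ba^{\omega'}$ otherwise, and then invoke the definition of $\mu^\omega$ as a limit of the cylinder functions $\mu_n[\ba]=\sum_{\bk\in[\ba]\cap\bJ_n}(\mR_\bk)^s$. The paper's proof is simply the two-line version of yours, leaving implicit the passage to the limit and the extension from cylinders to Borel sets that you spell out.
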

\begin{proof}
For all $\ba$ with $\bi\leq \ba$ we have
\[
\mR_\ba^\omega=\frac{r_\bi^\omega}{r_\bi^{\omega'}}\mR_\ba^{\omega'},
\]
and for all $\ba$ with $\bi\bot\bj\leq\ba$, we have $\mR_\ba^\omega=\mR_\ba^{\omega'}$. By definition of the measures $\mu^\omega$ and $\mu^{\omega'}$ the claim follows. 
\end{proof}

Denote by $I_t(\nu)$ the $t$-energy of a measure $\nu$ with support $E$, that is, let 
\[
I_t(\nu)=\iint_{E\times E}|x-y|^{-t}\,d\nu(x)\,d\nu (y).
\]
We verify that the expectation of $I_t(\pi_*\mu^\omega)$ is finite for all $t<s$ as Theorem \ref{energy estimate}. Here the image of the measure $\mu^\omega$ under $\pi(\cdot,\omega)$ is denoted by $\pi_*\mu^\omega$. For properties of energies of measures, including their connection to the dimension of the supporting set, see \cite[Chapter 8]{Mattila95}. 

\begin{remark}
By Lemma \ref{defmeas}, for all $\bi\in\bJ$ the function $\omega\mapsto \mu^\omega[\bi]$ is a measurable function. Since all open sets of $\bJ_\infty$ are disjoint finite unions of cylinder sets, also $\omega\mapsto\mu^\omega(A)$ is measurable for all open and closed sets. Since all continuous functions $f$ on $\bJ_\infty$ are limits of sequences of simple functions of the form $\sum_{i=1}^n c_i\chi_{A_i}$ for characteristic functions of open and closed sets $A_i$, also $\omega\mapsto\int f\,d\mu^\omega$ is measurable. Presenting $|\pi(\bi,\omega)-\pi(\bj,\omega)|^{-t}$ as a limit $\lim_{k\to\infty}\min\{|\pi(\bi,\omega)-\pi(\bj,\omega)|^{-t}, k\}$ of continuous functions, we see that $\omega\mapsto I_t(\mu^\omega)$ is measurable. 
\end{remark}

The proof of the following theorem uses some ideas in \cite[Theorem 5.1]{Solomyak98}.

\begin{theorem}\label{energy estimate}
The measure $\pi_*\mu^\omega$ satisfies:
\begin{enumerate}
\item{The expectation of the $t$-energy of $\pi_*\mu^\omega$ is finite for all $t< \min\{s, d\}$.}
\item{If $s>d$, then $\pi_*\mu^\omega$ is almost surely absolutely continuous.}
\end{enumerate}
\end{theorem}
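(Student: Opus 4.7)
The plan is to decompose the double integral defining $I_t(\pi_*\mu^\omega)$ by the length $p=|\bi\wedge\bj|$ of the longest common prefix. Writing $\ba=\bi\wedge\bj$, $\bi\in[\ba k]$, $\bj\in[\ba l]$ with $k\neq l$, and using symmetry (at the cost of a factor $2$) to restrict to $|x(\bi,\omega,p)|\ge|x(\bj,\omega,p)|$, the task reduces to bounding
\[
\sum_{p\ge 0}\sum_{|\ba|=p}\sum_{k\neq l}\mathbb{E}\left[\iint_{[\ba k]\times[\ba l]}|\pi(\bi,\omega)-\pi(\bj,\omega)|^{-t}\,d\mu^\omega(\bi)\,d\mu^\omega(\bj)\right].
\]
To estimate one summand I condition on the $\sigma$-algebra $\mathcal H$ generated by all labels $T_\bi^\omega$ with $\bi\neq\ba k$. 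The crucial point, afforded by Lemma \ref{changemeas}, is that $\mu^\omega|_{[\ba k]}=(r_{\ba k}^\omega)^s\nu$ for some $\mathcal H$-measurable measure $\nu$ on $[\ba k]$: the random measure is independent of $Q_{\ba k}^\omega$ and scales in $r_{\ba k}^\omega$ as $(r_{\ba k}^\omega)^s$. Everything else ($\mu^\omega|_{[\ba l]}$, the vectors $x(\bi,\omega,p)$ and $x(\bj,\omega,p)$, the factor $\mR_\ba^\omega$, and $T_{\ba l}^\omega$) is $\mathcal H$-measurable. Applying Lemma \ref{howto} pointwise in $(\bi,\bj)$ after pulling out the bounded factor $(r_{\ba k}^\omega)^s\le\sigma_+^s$ yields
\[
\mathbb{E}\left[\iint|\pi(\bi)-\pi(\bj)|^{-t}\,d\mu^\omega(\bi)\,d\mu^\omega(\bj)\,\Big|\,\mathcal H\right]\le C\mR_\ba^{-t}\,\nu([\ba k])\,\mu^\omega[\ba l].
\]

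Writing $\nu([\ba k])=\mR_\ba^s M_k$, where $M_k:=\mu^\omega[\ba k]/\mR_{\ba k}^s$ has the same distribution as the total mass $\mu^\omega(\bJ_\infty)$ by the product structure of $\mathbb P$, the three random variables $\mR_\ba$, $M_k$, $\mu^\omega[\ba l]$ depend on disjoint sets of tree nodes (ancestors of $\ba$; strict descendants of $\ba k$; subtree rooted at $\ba l$, respectively) and hence are independent. Using $\mathbb{E}[M_k]=1$ from Lemma \ref{defmeas}(3) together with $\mathbb{E}[\mu^\omega[\ba l]]=\mathbb{E}[\mR_{\ba l}^s]=m^{-(p+1)}$, and summing over the $m^p$ words $\ba$ of length $p$ and the $m(m-1)$ ordered pairs $(k,l)$, the level-$p$ contribution is bounded by a constant times $(\mathbb{E}[r^{s-t}])^p$. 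Since $r\in(0,1)$ and $s-t>0$, we have $\mathbb{E}[r^{s-t}]<1$, so the sum in $p$ converges geometrically, proving (1). The upper bound $t<d$ is the hypothesis of Lemma \ref{howto}.

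For part (2), when $s>d$, the goal is to show that $\pi_*\mu^\omega$ has an $L^2$ density. Define $F_\epsilon(x)=\pi_*\mu^\omega(B(x,\epsilon))/|B(0,\epsilon)|$; a standard Fubini rearrangement yields
\[
\int F_\epsilon(x)^2\,dx\le c_d\,\epsilon^{-d}\iint\mathbf{1}_{|\pi(\bi)-\pi(\bj)|<2\epsilon}\,d\mu^\omega(\bi)\,d\mu^\omega(\bj).
\]
Carrying out the decomposition of (1) verbatim, but invoking Lemma \ref{transversality} in place of Lemma \ref{howto}, the bound $\mathbb P^{\ba k}(|\pi-\pi|<2\epsilon)\le C'(2\epsilon)^d/\mR_\ba^d$ cancels the outer $\epsilon^{-d}$ and leaves a sum bounded by a constant times $\sum_{p\ge 0}(\mathbb{E}[r^{s-d}])^p$, which is finite since $s-d>0$. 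Hence $\mathbb{E}[\int F_\epsilon^2\,dx]$ is uniformly bounded in $\epsilon$, and Fatou's lemma gives $\liminf_{\epsilon\to 0}\int F_\epsilon^2\,dx<\infty$ almost surely. Along a subsequence $F_{\epsilon_n}\rightharpoonup F$ in $L^2(\mathbb R^d)$ by Banach--Alaoglu; since also $F_\epsilon\,dx\to\pi_*\mu^\omega$ in the distributional sense (the kernel $|B(0,\epsilon)|^{-1}\mathbf{1}_{B(0,\epsilon)}$ being an approximate identity), one concludes $d\pi_*\mu^\omega=F\,dx$.

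The main obstacle is the conditioning step in (1): Lemma \ref{howto} concerns integration of $|\pi-\pi|^{-t}$ purely over the label at a single node $\ba k$, whereas the integrand we must bound is weighted by the random measures $\mu^\omega|_{[\ba k]}$ and $\mu^\omega|_{[\ba l]}$ that themselves depend on this label. Lemma \ref{changemeas} is the key enabling device: although $\mu^\omega$ depends on $T_{\ba k}^\omega$, this dependence is entirely through the explicit multiplicative factor $(r_{\ba k}^\omega)^s$ and is independent of $Q_{\ba k}^\omega$. Once this factorisation is in place, the remaining bookkeeping (independence by disjoint support in the tree) is routine.
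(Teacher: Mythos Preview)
Your approach is essentially the same as the paper's: decompose by the common prefix, use Lemma~\ref{changemeas} to decouple the random measure from the single node label $T_{\ba k}$, apply Lemma~\ref{howto} (or Lemma~\ref{transversality} for part~(2)), and then exploit the product structure of $\mathbb P$ to sum geometrically. Part~(2) differs cosmetically: the paper uses the lower-density criterion \cite[Lemma~2.12(3)]{Mattila95} rather than your $L^2$--Banach--Alaoglu argument, but both are standard and correct.

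There is, however, a genuine slip in the bookkeeping of part~(1). You claim that $\mR_\ba$, $M_k$, and $\mu^\omega[\ba l]$ are independent because they depend on disjoint sets of tree nodes. This is false: $\mu^\omega[\ba l]$ depends on the labels at \emph{all} ancestors $\ba_1,\dots,\ba_p$ through the factor $\mR_{\ba l}^s=\mR_\ba^s\,r_{\ba l}^s$, so it is certainly not independent of $\mR_\ba$. The fix is immediate: write $\mu^\omega[\ba l]=\mR_\ba^s\,r_{\ba l}^s\,M_l$ with $M_l$ depending only on strict descendants of $\ba l$. Then $\mR_\ba$, $r_{\ba l}$, $M_k$, $M_l$ \emph{do} depend on disjoint node sets, and
\[
\mathbb E\big[\mR_\ba^{s-t}\,M_k\,\mu^\omega[\ba l]\big]
=\mathbb E\big[\mR_\ba^{2s-t}\big]\,\mathbb E[r^s]\,\mathbb E[M_k]\,\mathbb E[M_l]
=(\mathbb E[r^{2s-t}])^{p}\,\mathbb E[r^s].
\]
Summing over the $m^p$ words $\ba$ and the $m(m-1)$ pairs $(k,l)$ gives a level-$p$ contribution proportional to $(m\,\mathbb E[r^{2s-t}])^{p}$, and since $2s-t>s$, Lemma~\ref{number} yields $m\,\mathbb E[r^{2s-t}]=\mathbb E\big(\sum_i r_i^{2s-t}\big)<1$, so the series still converges. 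The paper sidesteps this correlation by conditioning on $\mathcal F_p$ and using the crude bound $\mR_\ba^{s-t}\le\sigma_+^{p(s-t)}$, then invoking Lemma~\ref{defmeas}(3) to sum the remaining masses. The same correction is needed in your part~(2), where the identical independence claim is implicit.
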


\begin{proof}
(1) Denote by $v(s)$ the number $(\tfrac{\sigma_+}{\sigma_-})^s$. 
By Lemma \ref{defmeas} the measure $\mu^\omega$ is well-defined for almost all $\omega\in\Omega$. Below we only consider $\omega$'s which are typical in this sense. Since $\mu^\omega$'s don't have atoms, we have for the expectation of the energy,
\begin{equation}\label{eq1}
\begin{aligned}
\mathbb E(I_t(\pi_*\mu^\omega))=\iiint&\frac{d\mu^\omega(\bi)\,d\mu^\omega(\bj)\,d\mathbb P(\omega)}{|\pi( 
\bi,\omega)-\pi( \bj,\omega)|
^t}=\iiint_{\bi\neq\bj}\frac{d\mu^\omega(\bi)\,d\mu^\omega(\bj)\,d\mathbb P(\omega)}{|
\pi(\bi,\omega)-\pi(\bj,\omega)|^t}\\
&\leq \sum_{k=0}^\infty\sum_{|\bq|=k}\sum_{i\neq j}\iint_{[\bq j]}\int_{[\bq 
i]}\frac{d\mu^\omega(\bi)\,d\mu^\omega(\bj)\,d\mathbb P(\omega)}{|\pi( 
\bi,\omega)-\pi(\bj,\omega)|^t}.
\end{aligned}
\end{equation}
For a while, fix $k$, $|\bq| = k$ and $i\neq j\in\{1,\dots,m\}$, and furthermore, fix 
$T_\ba^\omega$ for all $\ba\neq \bq i$ and all $\ba\neq \bq j$. Notice that then, given $\bi\in [\bq i]$ 
and $\bj\in[\bq j]$, the vectors $x(\bi, \omega, k)$ and $x(\bj, \omega, k)$ from 
Observation \ref{shorterx} are fixed. Let $X=X(\omega)=\{(\bi,\bj)\in[\bq i]\times[\bq j]\mid |x(\bi,\omega, k)|\geq |x(\bj, \omega, k)|\}$. Let $T_0\in]\sigma_-,\sigma_+[\times\CO(d)$, and denote by $\omega_{\bq i}$ the modification of $\omega\in\Omega$ obtained by changing $T_{\bq i}^{\omega_{\bq i}}=T_0$. Furthermore, by Lemma \ref{changemeas}, Fubini and Lemma \ref{howto}
\[
\begin{aligned}
\iiint_X&\frac{d\mu^\omega(\bi)\,d\mu^\omega(\bj)\,d\mathbb P^{\bq i}(\omega)}{|\pi( \bi,\omega)-\pi(\bj,\omega)|^t}\leq \iiint_{X}\frac{v(s) d\mu^{\omega_{\bq i}}(\bi)\,d\mu^{\omega}|_{[\bq j]}(\bj)\,d\mathbb P^{\bq i}(\omega)}{|\pi(\bi,\omega)-\pi(\bj,\omega)|^t}\\
&= \iint_{X}\int\frac{v(s)d\mathbb P^{\bq i}(\omega)d\mu^{\omega_{\bq i}}(\bi)\,d\mu^\omega|_{[\bq j]}(\bj)}{|\pi(\bi,\omega)-\pi(\bj,\omega)|^t}\\
&\leq v(s)C\mR_\bq^{-t}\mu^{\omega_{\bq i}}[\bq j]\mu^{\omega_{\bq i}}[\bq i].
\end{aligned}
\]
Since $\mu^{\omega_{\bq i}}$ and $\mu^{\omega_{\bq i}}|_{[\bq j]}$ do not depend on $T_{\bq i}^\omega$, by Lemma \ref{changemeas},
\begin{align*}
\int v(s)C\mR_\bq^{-t}\mu^{\omega_{\bq i}}[\bq j]\mu^{\omega_{\bq i}}[\bq i]\,d\mathbb P_{\bq i}(\omega)&=\int v(s)C\mR_\bq^{-t}\mu^{\omega_{\bq i}}[\bq j]\mu^{\omega_{\bq i}}[\bq i]\,d\mathbb P(\omega)\\
&\leq\int v(s)^2C\mR_\bq^{-t}\mu^{\omega}[\bq j]\mu^{\omega}[\bq i]\,d\mathbb P(\omega).
\end{align*}
Furthermore, using the fact that $\mu^{\omega}[\bq i]$ and $\mu^\omega[\bq j]$ are independent when conditioned on $\mathcal F_k$, by Lemma \ref{defmeas}
\begin{equation*}
\begin{aligned}
v(s)^2C\mathbb E(\mathbb E(\mR_\bq^{-t}\mu^{\omega}[\bq i]\mu^\omega[\bq j]\mid\mathcal F_k)) &= v(s)^2C\mathbb E(\mR_\bq^{-t}\mathbb E(\mu^{\omega}[\bq i]\mid\mathcal F_k)\mathbb E(\mu^\omega[\bq j]\mid\mathcal F_k))\\
&\leq v(s)^2C\mathbb E(\mR_\bq ^{-t}\mR^s_{\bq}\mathbb E(\mu^\omega[\bq j]\mid\mathcal F_k))\\
&\leq v(s)^2C\sigma_+^{k(s-t)}\mathbb E(\mu^\omega[\bq j]).
\end{aligned}
\end{equation*}
Combining the above calculations gives
\begin{equation}\label{eq3}
\int\iint_X\frac{d\mu^\omega(\bi)\,d\mu^\omega(\bj)}{|\pi( 
\bi,\omega)-\pi(\bj,\omega)|^t}\,d\mathbb P(\omega)\leq v(s)^2C\sigma_+^{k(s-t)}\mathbb E(\mu^\omega[\bq j]).
\end{equation}
Using \eqref{eq1}, \eqref{eq3}, the counterpart of \eqref{eq3} for $Y=Y(\omega)=\{(\bi,\bj)\in[\bq i]\times[\bq j]\mid|x(\bi,\omega, k)|<|x(\bj,\omega, k)|\}$, and Lemma \ref{defmeas} (3), we obtain 
\begin{equation}\label{sums}
\begin{aligned}
\mathbb E(I_t(&\pi_*\mu^\omega))\\
&=\sum_{k=0}^\infty\sum_{|\bq|=k}\sum_{i\neq j}\int\iint_X\frac{d\mu^\omega(\bi)\,d\mu^\omega(\bj)}{|\pi( 
\bi,\omega)-\pi(\bj,\omega)|^t} + \iint_Y\frac{d\mu^\omega(\bi)\,d\mu^\omega(\bj)}{|\pi( 
\bi,\omega)-\pi(\bj,\omega)|^t}\,d\mathbb P(\omega)\\
&\leq\sum_{k=0}^\infty \sum_{|\bq|=k} 2 m v(s)^2C\sigma_+^{k(s-t)}\mathbb E(\mu^\omega[\bq])\\
&=\sum_{k=0}^\infty 2 m v(s)^2C\sigma_+^{k(s-t)}<\infty,
\end{aligned}
\end{equation}
where the sum converges since $\sigma_+<1$.

(2) To prove the latter claim, by \cite[Lemma 2.12 (3)]{Mattila95} it suffices to check that 
\[
\iint \liminf _{\rho\to 0} \frac{\pi_*\mu^\omega(B(x,\rho))}{\lambda^d(B(x,\rho))}\,d(\pi_*\mu^\omega)(x)\,d\mathbb P (\omega)<\infty,
\]
and thus by Fatou lemma to prove 
\begin{equation*}
\liminf _{\rho\to 0} \rho^{-d}\iint \pi_*\mu^\omega(B(x,\rho))\,d(\pi_*\mu^\omega)(x)\,d\mathbb P<\infty.
\end{equation*}
Here $B(x,\rho)$ is an open ball of center $x$ and radius $\rho$. 

Denote the characteristic function of a set $A$ by $\chi_A$. Notice that 
\begin{align*}
\int \pi_*\mu^\omega(B(x,\rho))&\,d(\pi_* \mu^\omega)(x)=\iint \chi_{B(x,\rho)}(y)\, d(\pi_*\mu^\omega)(y)\, d(\pi_*\mu^\omega)(x)\\
&=\iint \chi_{\{\bk\mid |\pi(\bk,\omega)-\pi(\bj,\omega)|<\rho\}}(\bi)\,d\mu^\omega(\bi)\,d\mu^\omega(\bj)\\
&\le\sum_{k=0}^\infty\sum_{|\bq|=k}\sum_{i\neq j}\int_{[\bq j]}\int_{[\bq 
i]}\chi_{\{\bk\mid |\pi(\bk,\omega)-\pi(\bj,\omega)|<\rho\}}(\bi)\,d\mu^\omega(\bi)\,d\mu^\omega(\bj).
\end{align*}
Fix $k$, $\bq$ and $i\neq j$, and define $\omega_{\bq i}$ and the set $X$ as above. Then by Fubini and Lemma \ref{changemeas}
\begin{align*}
\iiint _X&\chi_{\{\bk\mid |\pi(\bk,\omega)-\pi(\bj,\omega)|<\rho\}}(\bi)\,d\mu^\omega(\bi)\,d\mu^\omega(\bj)\,d\mathbb P^{\bq i}\\
&\leq v(s)\iint _X\int \chi_{\{\bk\mid |\pi(\bk,\omega)-\pi(\bj,\omega)|<\rho\}}(\bi)\,d\mathbb P^{\bq i}\,d\mu^{\omega_{\bq i}}(\bi)\,d\mu^\omega|_{
[\bq j]}(\bj), 
\end{align*}
where, by \cite[Theorem 1.15]{Mattila95} and Lemma \ref{transversality}
\begin{align*}
\int \chi_{\{\bk\mid |\pi(\bk,\omega)-\pi(\bj,\omega)|<\rho\}}(\bi)\,d\mathbb P^{\bq i}&=\int _0^\infty \mathbb P^{\bq i}\{\omega\mid \chi_{\{\bk\mid |\pi(\bk,\omega)-\pi(\bj,\omega)|<\rho\}}(\bi)\geq t\}\,dt\\
&=\mathbb P^{\bq i}\{\omega\mid |\pi(\bi,\omega)-\pi(\bj,\omega)|<\rho\}\\
&\leq C'\rho^d\mR_{\bq}^{-d}.
\end{align*}
After this the estimate 
\[
\iiint _X\chi_{\{\bk\mid |\pi(\bk,\omega)-\pi(\bj,\omega)|<\rho\}}(\bi)\,d\mu^\omega(\bi)\,d\mu^\omega(\bj)\,d\mathbb P\leq C'v(s)^2\rho^d\sigma_+^{k(s-d)}\mathbb E(\mu^\omega[\bq j])
\]
follows as the inequality \eqref{eq3} above in the proof of (1). Repeating this argument on the set $Y$ and summing up over $k$, $\bq$ and $i\neq j$ gives, as in \eqref{sums},
\begin{align*}
\liminf _{\rho\to 0} \rho^{-d}\iint& \pi_*\mu^\omega(B(x,\rho))\,d(\pi_*\mu^\omega)(x)\,d\mathbb P\\
&\leq \sum_{k=0}^\infty \sum_{|\bq|=k} 2 m v(s)^2C'\sigma_+^{k(s-d)}\mathbb E(\mu^\omega[\bq])<\infty,
\end{align*}
finishing the proof. 
\end{proof}

\begin{main}
To prove the first claim, notice that $\dimH F(\omega)\leq \min\{s,d\}$ almost surely by Lemma \ref{martingale}, and fix $0<t<\min\{s,d\}$. By Lemma \ref{defmeas} and Theorem \ref{energy estimate} (1) there exists a finite Borel measure $\mu^\omega$ on $\bJ_\infty$ with
\begin{equation}
\mathbb E(\iint_{F(\omega)\times F(\omega)}\frac{ d(\pi_*\mu^\omega(x))\,d(\pi_*\mu^\omega(y))}{|x-y|^t})<\infty.
\end{equation}
Thus, almost surely, $\dimH F(\omega)\geq t$. (See \cite[Lemma 5.2]{Falconer88}.) Approaching $s$ along a sequence will result in $\dimH F(\omega)\geq s$, almost surely. 

The latter claim is an immediate consequence of Theorem \ref{energy estimate} (2). 
\qed
\end{main}

\section{An example and further problems}\label{further}

We begin this section by giving an example that shows sharpness of Theorem \ref{main} in a sense. This is an example of a fractal set in $\mathbb R^d$ such that for the generating similitudes contraction ratios are uniformly distributed, but rotations deterministic. The dimension of such a set can be strictly less than the number $\min\{s,d\}$ of Theorem \ref{main}. 
\begin{example}
Fix $m$ so large that $\tfrac 1m <\sigma_-$. Consider a system of $m$ similarities in $\mathbb R$ having the random structure described in Section \ref{preliminaries}, that is, each $r_\bi$ is uniformly distributed in $]\sigma_-,\sigma_+[$. Then for the limiting set $F$
\[
\dimH F\leq 1=\min\{1,s\},
\]
where $s>1$ satisfies $m \sigma_-^{s}=1$. Now, embed this set in $\mathbb R^d$ for $d\geq 2$. Still
\[
\dimH F\leq 1<\min\{d,s\},
\]
and the claim of Theorem \ref{main} does not hold for the set $F$. 
\end{example}

We then present ways of generalizing the result. Instead of similarities one could also consider affine mappings, and conjecture
\begin{conjecture}\label{affine}
The Hausdorff dimension of a uniformly random self-affine set is a constant number almost surely.
\end{conjecture}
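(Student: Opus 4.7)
The plan is to verify the conjecture by a $0$-$1$ type argument that exploits only the branching structure and the independence of the labels, and hence should go through as soon as one fixes any reasonable meaning for ``uniformly random self-affine'' (e.g.\ any i.i.d.\ model of affine contractions whose singular values lie in some interval $[\sigma_-,\sigma_+]\subset(0,1)$). Write $\omega^{(i)}$ for the random environment obtained by restricting $\omega$ to the subtree below the $i$-th child of the root, together with the obvious re-indexing, and let $f_i^\omega$ be the affine map labelling the edge from the root to $i$. The same telescoping that gives \eqref{defF} yields the self-referential identity
\[
F(\omega)=\bigcup_{i=1}^m f_i^\omega\bigl(F(\omega^{(i)})\bigr),
\]
and under the standing i.i.d.\ assumption the environments $\omega^{(1)},\dots,\omega^{(m)}$ are mutually independent, each distributed as $\omega$, and independent of $f_1^\omega,\dots,f_m^\omega$.

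First I would verify that $\omega\mapsto\delta(\omega):=\dimH F(\omega)$ is $\mathbb P$-measurable: $F(\omega)$ is the decreasing intersection of the compact sets $\bigcup_{|\bi|=k}f^\omega_{\bi_1}\circ\dots\circ f^\omega_{\bi_k}(B)$ for $B$ a large ball, which depend continuously on $\omega$, and $\dimH$ is a Borel functional on the space of non-empty compact subsets of $B$ in the Hausdorff metric. Because every $f_i^\omega$ is a bi-Lipschitz affine bijection onto its image, Hausdorff dimension is preserved under it, and the displayed decomposition gives the functional equation
\[
\delta(\omega)=\max_{1\leq i\leq m}\delta(\omega^{(i)}).
\]

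Setting $p_c=\mathbb P(\delta(\omega)\geq c)$ for $c\in\mathbb R$, the fact that the $\delta(\omega^{(i)})$ are i.i.d.\ copies of $\delta(\omega)$ gives
\[
p_c=\mathbb P\bigl(\max_i\delta(\omega^{(i)})\geq c\bigr)=1-(1-p_c)^m,
\]
equivalently $(1-p_c)\bigl((1-p_c)^{m-1}-1\bigr)=0$, so $p_c\in\{0,1\}$ whenever $m\geq 2$. The non-increasing $\{0,1\}$-valued map $c\mapsto p_c$ therefore drops from $1$ to $0$ at a unique critical value $c^{*}$, and $\delta(\omega)=c^{*}$ $\mathbb P$-almost surely.

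The main obstacle is not the $0$-$1$ step above, which is robust, but rather in pinning down what ``uniformly random self-affine'' should mean in the first place: unlike the similarity case, where Haar measure on $\CO(d)$ together with Lebesgue measure on the contraction ratio gives a canonical choice, there is no natural uniform probability on the set of affine contractions with prescribed singular value bounds, and different reasonable models will in general produce different almost-sure values of $c^{*}$. The argument above secures almost sure constancy under any i.i.d.\ model with a uniform contraction bound; the substantive content one would hope to add to Conjecture \ref{affine} is an identification of $c^{*}$ with the minimum of $d$ and a suitable affinity dimension in the spirit of Falconer-Solomyak, and this is where the genuine difficulty lies, since the transversality lemma of Section \ref{beginning} depends critically on the linear parts being conformal and would have to be reworked from scratch in the non-conformal setting.
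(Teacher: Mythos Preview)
The paper does not prove Conjecture~\ref{affine}; it is stated as an open problem in Section~\ref{further}, with the remark that the obstacle to adapting the proof of Theorem~\ref{main} is the non-multiplicativity of the singular value function. There is therefore no proof in the paper to compare your attempt against.

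That said, your zero--one argument is essentially correct and does establish the literal statement of the conjecture, namely almost sure constancy of $\dimH F(\omega)$. The decomposition $F(\omega)=\bigcup_i f_i^\omega(F(\omega^{(i)}))$ holds in this model because the translations depend only on the last index and hence reproduce themselves on each subtree; the $\omega^{(i)}$ are i.i.d.\ copies of $\omega$ under the product measure; bi-Lipschitz invariance of Hausdorff dimension then yields $\delta(\omega)=\max_i\delta(\omega^{(i)})$; and the equation $p_c=1-(1-p_c)^m$ forces $p_c\in\{0,1\}$, so $\delta$ is almost surely constant. Left-continuity of $c\mapsto p_c$ pins the constant down uniquely. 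The only step that deserves a reference rather than a one-line justification is the Borel measurability of $K\mapsto\dimH K$ on the hyperspace of compact sets; this is known (Mattila--Mauldin), so the argument stands.

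You are right that this leaves untouched what the paper is really after. The sentence following the conjecture makes clear that the intended content is the identification of the almost-sure constant with an explicit affinity-type dimension defined via singular value functions, and the paper's own comment about non-multiplicativity points to exactly the difficulty you name: the transversality machinery of Section~\ref{beginning} relies on the linear parts being conformal. So your contribution resolves the constancy part of the conjecture by a route entirely different from (and much softer than) the energy/transversality method used for Theorem~\ref{main}, while the quantitative identification remains open.
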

Here the uniform distribution on the space of contractive bijective linear mappings is the normalized Lebesgue measure $\theta$. The probability is defined to be the product over the tree $\bJ$, as in Section \ref{preliminaries}. The number giving the dimension can be defined using singular value functions instead of $\mR_\bi^s$. (For definition of singular value function, see \cite{Falconer88}, for example.) The main problem in proving Conjecture \ref{affine} follows from this; unlike in the similitude case, in the affine case the singular value function is not multiplicative, and multiplicativity is needed in multiple places in the proof of Theorem \ref{main}.

Let us next consider a somewhat more general, related problem. Let ${\bf T}=\{T_1,\dots,T_m\}$ be a collection of independent, $\theta$-distributed linear mappings, and fix $\ba=\{a_1,\dots ,a_m\}$, $a_i\in\mathbb R^d$ with $a_1\neq\dots\neq a_m$. Denote by $F({\bf T}, \ba)$ the limiting set corresponding to the IFS ${f_1,\dots, f_m}$, $f_i(x)=T_i(x) + a_i$. Then one can ask the question
\begin{question}\label{once}
Is it true that $\dimH F({\bf T}, \ba)=\dimH F ({\bf T})$, for almost all ${\bf T}$? 
\end{question}
This question is, in a sense, a more natural continuation of Falconer's \cite{Falconer88} than Conjecture \ref{affine}. However, it seems to be somewhat difficult to verify the answer one way or the other. 

\subsection*{Acknowledgements. }The author thanks Esa and Maarit J\"arvenp\"a\"a for many helpful comments during the preparation of the article, and Thomas Jordan and Pertti Mattila for useful remarks. 

\bibliography{ranaffbib}
\bibliographystyle{abbrv}

\end{document}